\newlength{\defbaselineskip}
\newtheorem{theorem}{Theorem}[section]
\newtheorem{example}{Example}[section]
\newtheorem{lemma}{Lemma}[section]
\newtheorem{remark}{Remark}[section]
\numberwithin{equation}{section}
\begin{document}
\title{A study on summation-integral type operators 
}
\maketitle
\begin{center}
{\bf Rishikesh Yadav$^{1,\dag}$,  Ramakanta Meher$^{1,\star}$,  Vishnu Narayan Mishra$^{2,\circledast}$}\\
$^{1}$Applied Mathematics and Humanities Department,
Sardar Vallabhbhai National Institute of Technology Surat, Surat-395 007 (Gujarat), India.\\
$^{2}$Department of Mathematics, Indira Gandhi National Tribal University, Lalpur, Amarkantak-484 887, Anuppur, Madhya Pradesh, India\\
\end{center}
\begin{center}
$^\dag$rishikesh2506@gmail.com,  $^\star$meher\_ramakanta@yahoo.com,
 $^\circledast$vishnunarayanmishra@gmail.com
\end{center}

\vskip0.5in

\begin{abstract}
In this paper,we investigate the approximation properties of the summation-integral type operators as defined by Mishra et al. (Boll. Unione Mat. Ital. (2016) 8:297-305) and determine the local results as well as prove the convergence theorem of the defined operators. Further check the asymptotic behaviour of the operators and obtain the asymptotic formula for the operators, moreover, the quantitative means of  Voronovskaja type theorem is also discussed for an upper bound of the point wise convergent, as well as obtain the Gr$\ddot{\text{u}}$ss Voronovskaya-type theorem. Finally, the graphical representation is given to support the approximation results of the operators. 
\end{abstract}
\subjclass \textbf{MSC 2010}: {41A25, 41A35, 41A36}.

\section{Introduction}
In 2016, Mishra et al. \cite{MGN} carried out their works on approximation properties for the operators defined by:
\begin{eqnarray}\label{rb}
S_n^*(g;x)=u_n\sum\limits_{j=0}^\infty s_{u_n,j}(x)\int\limits_0^{\infty} s_{u_n,j}(t) g(t) dt,
\end{eqnarray}
where $s_{u_n,j}(x)=e^{-u_nx}\frac{(u_nx)^{j}}{j!}$ and they consider $u_n\to\infty$ as $n\to\infty$ with condition $u_1=1$, here fucntions considered to be Lebesgue integrable. By simple calculation, if we take $u_n=n$ then the above operators (\ref{rb}), reduced into  Sz$\acute{\text{a}}$sz-Mirakjan Durremeyr operators defined by Mazhar and Totik \cite{MS}. The important properties of the defined operators are studied by Mishra et al. which can be applied to the operators defined Mazhar and Totik. Regarding approximation of the function by Durrmeyer type operators, many works have been done in this direction \cite{VG1,VG2, VG3}.  \\

Also the discussion regarding Durrmeyer-type modification of Sz$\acute{\text{a}}$sz-Mirakjan operators is seen in \cite{MS} where the authors gave an important result for the Durrmeyer type operators, which are defined on $[0,\infty)$ as: 
\begin{equation}\label{mt}
A_n(f;x)=f(0)s_{n,0}(x)+n\sum\limits_{j=1}^\infty s_{n,j}(x)\int\limits_0^{\infty} s_{n,j-1}(t) g(t) dt.
\end{equation}
All the above operators (\ref{rb}, \ref{mt}) are generalized form of  the Sz\'asz-Mirakjan operators \cite{MG,SO} define by:
\begin{eqnarray}\label{mt1}
{SM}_n(f;x)=\sum\limits_{j=0}^\infty s_{n,j}(x)g\left(\frac{j}{n}\right),
\end{eqnarray}
where $s_{n,j}=e^{-nx}\frac{(nx)^{j}}{j!}$ is the Sz\'asz-Mirakjan basis function. And the above operators $\{{SM}_n\}$ were studied by  Sz\'asz \cite{SO}. Also a natural generalization of the Sz\'asz-Mirakjan operators can be seen (presented in \cite{GDM}) in the form of strictly increasing sequence as a simple replacement of $n$ by $u_n$ such that $u_1=1$ and $u_n\to\infty$ as $n\to\infty$ in the above operators (\ref{mt1}) and hence the modification is seen in the form of:
\begin{eqnarray}\label{mt2}
{S}_n^*(f;x)=\sum\limits_{j=0}^\infty s_{u_n,j}(x)g\left(\frac{j}{u_n}\right).
\end{eqnarray}
Thus, the works done by Mishra et al. \cite{MGN} on the natural modification of the operators defined in \cite{MS} called as Sz$\acute{\text{a}}$sz-Mirakjan Durremeyr operators that put a crucial impact. For further proceed, we need some basic lemma.
\begin{lemma}\label{lem1}
Consider the function  $g$ is integrable, continuous and bounded on the given interval $[0,\infty)$, then the central moments can be obtained as:
\begin{eqnarray}
\Theta_{n,m}=u_n\sum\limits_{j=0}^\infty s_{u_n,j}(x)\int\limits_0^{\infty} s_{u_n,j}(t) (t-x)^m dt,
\end{eqnarray}
where $m=0,1,2,\ldots$. So for $m=0,1$, we can get the the central moments as follows:
\begin{eqnarray}
\Theta_{n,0}=1, \Theta_{n,1}=\frac{1}{u_n},
\end{eqnarray}
in general, we have
\begin{eqnarray}
u_n\Theta_{n,m+1}=x\left(\Theta_{n,m}'+2m\Theta_{n,m-1}+(1+m)\Theta_{n,m} \right),
\end{eqnarray}
this lead to 
\begin{eqnarray}
\Theta_{n,m}=O\left(u_n^{-\left[\frac{m+1}{2}\right]} \right).
\end{eqnarray}
\end{lemma}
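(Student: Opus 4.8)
The plan is to establish the three assertions in order, each feeding the next. For the base cases $\Theta_{n,0}=1$ and $\Theta_{n,1}=1/u_n$ I would first compute the ordinary moments $S_n^*(t^m;x)$ and then pass to central moments. The single computation driving everything is the Gamma integral $\int_0^\infty s_{u_n,j}(t)\,t^m\,dt=(j+m)!/(j!\,u_n^{m+1})$, which follows from $s_{u_n,j}(t)=e^{-u_nt}(u_nt)^j/j!$ together with $\int_0^\infty e^{-u_nt}t^{j+m}dt=(j+m)!/u_n^{j+m+1}$. Substituting this into the definition gives $S_n^*(t^m;x)=u_n^{-m}\sum_j s_{u_n,j}(x)\,(j+m)!/j!$, and since $\sum_j s_{u_n,j}(x)=1$ and $\sum_j j\,s_{u_n,j}(x)=u_nx$ (the zeroth and first moments of the Poisson weights $s_{u_n,j}(x)$ of parameter $u_nx$), one reads off $S_n^*(1;x)=1$ and $S_n^*(t;x)=x+1/u_n$. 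Hence $\Theta_{n,0}=S_n^*(1;x)=1$ and $\Theta_{n,1}=S_n^*(t;x)-xS_n^*(1;x)=1/u_n$.

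For the recurrence the engine is the differential identity $x\,s_{u_n,j}'(x)=(j-u_nx)\,s_{u_n,j}(x)$, valid for the Poisson-type basis in either variable. Differentiating $\Theta_{n,m}$ in $x$ produces two pieces: the derivative hitting $(t-x)^m$ gives $-m\,\Theta_{n,m-1}$, while the derivative hitting $s_{u_n,j}(x)$ gives, after the identity, a factor $(j-u_nx)$ inside the sum-integral. I would then split $j-u_nx=(j-u_nt)+u_n(t-x)$: the second summand immediately yields $\tfrac{u_n}{x}\Theta_{n,m+1}$, and for the first I use the same identity in the $t$ variable, $(j-u_nt)s_{u_n,j}(t)=t\,\partial_t s_{u_n,j}(t)$, followed by integration by parts in $t$. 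The boundary terms vanish (the factor $t$ kills the contribution at $0$ and the exponential decay of $s_{u_n,j}(t)$ kills it at $\infty$), and expanding $t=(t-x)+x$ in the resulting integrand regroups everything into multiples of $\Theta_{n,m}$ and $\Theta_{n,m-1}$. Collecting the three contributions and multiplying through by $x$ gives a three-term recurrence of the stated form relating $\Theta_{n,m+1}$ to $\Theta_{n,m}'$, $\Theta_{n,m}$ and $\Theta_{n,m-1}$, the one delicate point being to keep careful track of which terms acquire a factor $x$ from the substitution $t=(t-x)+x$, since that is exactly where the coefficients of $\Theta_{n,m}$ and $\Theta_{n,m-1}$ are pinned down.

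The order estimate $\Theta_{n,m}=O(u_n^{-[\frac{m+1}{2}]})$ I would prove by induction on $m$ from the recurrence, carried out jointly with the observation that, for fixed $n$, each $\Theta_{n,m}$ is a polynomial in $x$ whose coefficients are $O(u_n^{-[\frac{m+1}{2}]})$; this polynomial structure is what guarantees that $\Theta_{n,m}'$ is of the same order as $\Theta_{n,m}$ on any fixed bounded $x$-interval, so differentiation does not spoil the estimate, and it also propagates through the recurrence. Granting the inductive hypothesis up to index $m$, the right-hand side is a combination of $x\Theta_{n,m}'$ and $\Theta_{n,m}$ (both $O(u_n^{-[\frac{m+1}{2}]})$) and $x\Theta_{n,m-1}$ (which is $O(u_n^{-[\frac{m}{2}]})$); the last is the largest, so the right-hand side is $O(u_n^{-[\frac{m}{2}]})$, and dividing by $u_n$ gives $\Theta_{n,m+1}=O(u_n^{-1-[\frac{m}{2}]})=O(u_n^{-[\frac{m+2}{2}]})$, which is precisely the claim at index $m+1$. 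The base cases $m=0,1$ are supplied by the first step.

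I expect the main obstacle to be the recurrence step: the integration by parts together with the double use of the differentiation identity in the two variables, and above all the bookkeeping of the $x$-factors, which is what fixes the exact coefficients. Once the recurrence is secured, the order estimate is a routine induction, its only subtle points being the floor-function arithmetic $1+[\frac{m}{2}]=[\frac{m+2}{2}]$ and the justification (via the polynomial-in-$x$ structure) that $\Theta_{n,m}'$ inherits the order of $\Theta_{n,m}$.
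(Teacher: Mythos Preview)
The paper does not supply a proof of this lemma at all; it is stated without argument (presumably imported from \cite{MGN}), so there is no ``paper's own proof'' to compare against.

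Your strategy is the standard one for moment recurrences of Sz\'asz--Mirakjan--Durrmeyer type operators and is sound: Gamma integrals for the base cases, the identity $x\,s_{u_n,j}'(x)=(j-u_nx)s_{u_n,j}(x)$ in both variables, the splitting $j-u_nx=(j-u_nt)+u_n(t-x)$, and integration by parts in $t$. The induction for the order estimate is also correct, and your remark that the polynomial-in-$x$ structure controls $\Theta_{n,m}'$ is exactly the right justification.

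One caution: the recurrence as printed in the paper is not internally consistent with the stated base values. Taking $m=0$ gives $u_n\Theta_{n,1}=x(\Theta_{n,0}'+\Theta_{n,0})=x$, i.e.\ $\Theta_{n,1}=x/u_n$, contradicting $\Theta_{n,1}=1/u_n$; and for $m=1$ it yields $\Theta_{n,2}=2x/u_n+2x/u_n^2$, not the $2x/u_n+2/u_n^2$ recorded in the subsequent Remark. So when you carry out the bookkeeping you flagged as delicate, expect to obtain a recurrence that differs from the displayed one by a constant (non-$x$) term---the correct relation for these operators has the shape $u_n\Theta_{n,m+1}=x\Theta_{n,m}'+2mx\Theta_{n,m-1}+(m+1)\Theta_{n,m}$, with the last summand not multiplied by $x$. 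This discrepancy does not affect the order estimate, which is the part actually used downstream.
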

\begin{remark}\label{rem1}
The limit of central moment can be written as:
\begin{eqnarray}
\underset{n\to\infty}\lim u_n\Theta_{n,2}=\frac{2 (1+u_n x)}{u_n}=2x.
\end{eqnarray}
\end{remark}

\begin{remark}
Second order central moment can be written as:
\begin{eqnarray}
\Theta_{n,2}=\frac{2 (1+u_n x)}{u_n^2}=\frac{2}{u_n}\left(x+\frac{1}{u_n} \right)=\frac{2}{u_n}\zeta_n(x)~(\text{say}).
\end{eqnarray}
\end{remark}
\section{Local Approximation properties}
Next, we estimate the approximation of the defined operators (\ref{rb}), by a new type of Lipschit maximal function with order $r\in(0,1]$,  defined by Lenze \cite{LB} as
\begin{eqnarray}\label{eq8}
\kappa_r(f,x)=\underset{x,s\geq 0}\sup \frac{|f(u)-f(v)|}{|u-v|^r},~~u\neq v. 
\end{eqnarray}
Using the definition Lipschit maximal function, we have the following theorem.
\begin{theorem}
For any $g\in C_B[0,\infty)$ with $r\in(0,1]$ then one can obtain
\begin{eqnarray*}
\left|S_n^*(g;x)(g;x)-g(x)\right| &\leq  \kappa_r(g,x)\left(\Theta_{n,2}\right)^{\frac{r}{2}}.
\end{eqnarray*}
\end{theorem}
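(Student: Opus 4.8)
The plan is to exploit three structural features of the operator: its linearity, the fact that it reproduces constants, and the defining inequality of the Lipschitz-maximal function. Since Lemma \ref{lem1} gives $\Theta_{n,0}=1$, the operator satisfies $S_n^*(1;x)=1$, so I can rewrite the error as
\[
S_n^*(g;x)-g(x)=u_n\sum_{j=0}^\infty s_{u_n,j}(x)\int_0^\infty s_{u_n,j}(t)\bigl(g(t)-g(x)\bigr)\,dt.
\]
Taking absolute values and passing them inside the nonnegative sum and integral yields
\[
\left|S_n^*(g;x)-g(x)\right|\leq u_n\sum_{j=0}^\infty s_{u_n,j}(x)\int_0^\infty s_{u_n,j}(t)\,\bigl|g(t)-g(x)\bigr|\,dt.
\]

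Next I would invoke the definition of $\kappa_r$ in the form $|g(t)-g(x)|\leq \kappa_r(g,x)\,|t-x|^r$, valid for every $t\geq 0$. Substituting this pointwise bound and pulling the constant $\kappa_r(g,x)$ in front reduces the problem to estimating
\[
u_n\sum_{j=0}^\infty s_{u_n,j}(x)\int_0^\infty s_{u_n,j}(t)\,|t-x|^r\,dt,
\]
which is precisely the operator applied to the function $t\mapsto|t-x|^r$.

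The decisive step is to control this quantity by the second central moment $\Theta_{n,2}$. Because $S_n^*(1;x)=1$, the weight $d\mu_x(t)=u_n\sum_{j} s_{u_n,j}(x)\,s_{u_n,j}(t)\,dt$ is a probability measure, and I would apply H\"older's inequality with the conjugate pair $p=\tfrac{2}{r}$ and $q=\tfrac{2}{2-r}$, which is admissible since $r\in(0,1]$ forces $p\geq 2$. This gives
\[
\int_0^\infty|t-x|^r\,d\mu_x(t)\leq\left(\int_0^\infty|t-x|^2\,d\mu_x(t)\right)^{r/2}=\bigl(\Theta_{n,2}\bigr)^{r/2},
\]
and combining the three displays delivers the asserted bound. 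I expect the only delicate point to be checking the admissibility of the H\"older exponents together with the finiteness of the relevant moments, so that the interchange of absolute value with the summation and integration is justified; once the probability-measure interpretation is in place, the estimate follows immediately. An equivalent route replaces H\"older's inequality by Jensen's inequality applied to the concave map $s\mapsto s^{r/2}$ on $[0,\infty)$.
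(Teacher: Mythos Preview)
Your proof is correct and follows essentially the same route as the paper: bound $|g(t)-g(x)|$ by $\kappa_r(g,x)\,|t-x|^r$ via the Lipschitz maximal function, then apply H\"older's inequality with exponents $p=2/r$ and $q=2/(2-r)$ to reduce to the second central moment $\Theta_{n,2}$. Your write-up is in fact more explicit than the paper's (you spell out the probability-measure structure and the admissibility of the H\"older pair), but the argument is the same.
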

\begin{proof}
By equation (\ref{eq8}), we can write
\begin{eqnarray*}
\left|S_n^*(g;x)-g(x)\right| &\leq \kappa_r(g,x)S_n^*(|u-v|^r;x).
\end{eqnarray*}
Using, H$\ddot{\text{o}}$lder's inequality with $j=\frac{2}{r}$, $l=\frac{2}{2-r}$, one can get
\begin{eqnarray*}
\left|S_n^*(g;x)(g;x)-g(x)\right| &\leq & \kappa_r(g,x)\left(S_n^*(g;x)((u-v)^2;x)\right)^{\frac{r}{2}}=\kappa_r(f,x)\left(\Theta_{n,2}\right)^{\frac{r}{2}}.
\end{eqnarray*}
\end{proof}

Next theorem is based on modified Lipschitz type spaces \cite{OMA} and this spaces is defined by 
\begin{eqnarray*}
Lip_M^{a_1,a_2}(s)=\Bigg\{ g\in C_B[0,\infty):|g(u)-g(v)|\leq M\frac{|u-v|^s}{\left(u+v^2a_1+v a_2\right)^{\frac{s}{2}}},~~\text{where}~u,v\geq0 ~\text{are~variables},~s\in(0,1] \Bigg\},
\end{eqnarray*}
here, $a_1, a_2$ are the fixed numbers and $M>0$ is constant.
\begin{theorem}
For $g\in Lip_M^{a_1,a_2}(s)$ and $0<s\leq 1$, an inequality holds:
\begin{eqnarray*}
\left|S_n^*(g;x)-g(x)\right| &\leq & M\left(\frac{\Theta_{n,2}}{x(xa_1+a_2)}\right)^{\frac{s}{2}},~~M>0.
\end{eqnarray*}
\end{theorem}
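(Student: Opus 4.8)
The plan is to exploit the positivity of $S_n^*$ together with the defining inequality of the space $Lip_M^{a_1,a_2}(s)$, thereby reducing the estimate to a bound on $S_n^*(|t-x|^s;x)$ that can be controlled by the second central moment $\Theta_{n,2}$. First, since $\Theta_{n,0}=S_n^*(1;x)=1$ by Lemma~\ref{lem1}, I would write $S_n^*(g;x)-g(x)=S_n^*\big(g(t)-g(x);x\big)$ and use the linearity and positivity of the operator to move the absolute value inside, obtaining
\[
\left|S_n^*(g;x)-g(x)\right|\leq S_n^*\big(|g(t)-g(x)|;x\big).
\]

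Next I would invoke the membership $g\in Lip_M^{a_1,a_2}(s)$ with the identification $u=t$, $v=x$, so that $|g(t)-g(x)|\leq M|t-x|^s\big(t+a_1x^2+a_2x\big)^{-s/2}$ and hence
\[
\left|S_n^*(g;x)-g(x)\right|\leq M\, S_n^*\!\left(\frac{|t-x|^s}{\left(t+a_1x^2+a_2x\right)^{s/2}};x\right).
\]
The decisive observation---and the only genuinely nontrivial point---is how to handle the factor $(t+a_1x^2+a_2x)^{-s/2}$, which still depends on the integration variable $t$ and therefore cannot be pulled outside the operator as a constant. Since $t\geq 0$ throughout the range of integration, I would bound the denominator below by $t+a_1x^2+a_2x\geq a_1x^2+a_2x=x(a_1x+a_2)$, which renders the offending factor independent of $t$ and allows it to be extracted, yielding
\[
\left|S_n^*(g;x)-g(x)\right|\leq \frac{M}{\big(x(a_1x+a_2)\big)^{s/2}}\,S_n^*\big(|t-x|^s;x\big).
\]

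Finally, I would estimate $S_n^*(|t-x|^s;x)$ by H\"older's inequality with the conjugate exponents $p=2/s$ and $q=2/(2-s)$, exactly as in the preceding theorem, distributing the positive operator over the product $|t-x|^s\cdot 1$ to get
\[
S_n^*\big(|t-x|^s;x\big)\leq \big(S_n^*\big((t-x)^2;x\big)\big)^{s/2}\big(S_n^*(1;x)\big)^{(2-s)/2}.
\]
Using $S_n^*(1;x)=\Theta_{n,0}=1$ and $S_n^*\big((t-x)^2;x\big)=\Theta_{n,2}$ from Lemma~\ref{lem1}, the right-hand side collapses to $\big(\Theta_{n,2}\big)^{s/2}$, and substituting back produces exactly
\[
\left|S_n^*(g;x)-g(x)\right|\leq M\left(\frac{\Theta_{n,2}}{x(a_1x+a_2)}\right)^{s/2}.
\]
I expect the main obstacle to be precisely the treatment of the $t$-dependent denominator; once it is bounded below by its value at $t=0$, the remainder is a standard H\"older argument entirely parallel to the earlier Lipschitz-maximal-function theorem.
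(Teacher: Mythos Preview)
Your proof is correct and follows essentially the same approach as the paper: apply positivity, use the Lipschitz condition, bound the $t$-dependent denominator below by $x(a_1x+a_2)$ via $t\ge 0$, and then apply H\"older with exponents $2/s$ and $2/(2-s)$ to reduce to $\Theta_{n,2}$. The only difference is organizational---the paper splits into the cases $s=1$ and $s\in(0,1)$ and, in the latter, applies H\"older to $|g(t)-g(x)|$ before invoking the Lipschitz bound, whereas you apply the Lipschitz bound first and then H\"older to $|t-x|^s$, which handles all $s\in(0,1]$ uniformly and is arguably cleaner.
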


\begin{proof}
Since $s\in(0,1]$. To prove the above result, one can discuss by the cases on $s$.\\
\textbf{Case 1.} For $s=1$, it can be observed that  $\frac{1}{t+x^2a_1+xa_2)}\leq \frac{1}{x(xa_1+a_2)}$ and it implies 
\begin{eqnarray*}
\left|S_n^*(g;x)-g(x)\right| &\leq & S_n^*(|g(t)-g(x)|;x)\\
&\leq & M S_n^*\left(\frac{|t-x|}{\left(t+x^2a_1+xa_2\right)^{\frac{1}{2}}};x\right)\\
&\leq & \frac{M}{\left(x(xa_1+a_2)\right)^{\frac{1}{2}}}S_n^*(|t-x|;x) \\
&\leq & \frac{M}{\left(x(xa_1+a_2)\right)^{\frac{1}{2}}}\left(\Theta_{n,2}\right)^{\frac{1}{2}}\\
&\leq & M\left(\frac{\Theta_{n,2}}{x(xa_1+a_2)}\right)^{\frac{1}{2}}.
\end{eqnarray*}
\textbf{Case 2.} for $s\in (0,1)$ and using  H$\ddot{\text{o}}$lder inequality with $l=\frac{2}{s}, m=\frac{2}{2-s}$, we get
\begin{eqnarray*}
\left|S_n^*(g;x)-g(x)\right| &\leq & \left(S_n^*(|g(t)-g(x)|^{\frac{2}{s}};x)\right)^{\frac{s}{2}}\leq MS_n^*\left(\frac{|t-x|^{2}}{\left(t+x^2a_1+xa_2\right)};x\right)^{\frac{s}{2}}\\
&\leq & MS_n^*\left(\frac{|t-x|^{2}}{\left(x(xa_1+a_2)\right)};x\right)^{\frac{s}{2}}\\
&\leq & M\left(\frac{\Theta_{n,2}}{x(xa_1+a_2)}\right)^{\frac{s}{2}}.
\end{eqnarray*}
Thus, the proof is completed. 
\end{proof}

\begin{theorem}\label{th1}
For the continuous and bounded function $g$ defined on $[0,\infty)$, the convergence of the operators can be obtained as:
\begin{eqnarray}
\underset{n\to\infty}\lim S_n^*(g;x)=g(x), 
\end{eqnarray}
uniformly on any compact interval of $[0,\infty)$.
\end{theorem}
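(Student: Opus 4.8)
The plan is to invoke the Bohman--Korovkin theorem: if a sequence of positive linear operators reproduces the three test functions $e_0(t)=1$, $e_1(t)=t$, $e_2(t)=t^2$ in the limit, uniformly on a compact interval, then the operators converge to the identity uniformly on that interval for every $g\in C_B[0,\infty)$. First I would observe that $S_n^*$ is manifestly a positive linear operator, since the basis functions $s_{u_n,j}$ are nonnegative and the operator is assembled from sums and integrals against them. It therefore suffices to verify the three test-function conditions.

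The key computational step is to recover the ordinary moments $S_n^*(e_i;x)$ for $i=0,1,2$ from the central moments supplied by Lemma~\ref{lem1}. Expanding $(t-x)^m$ by the binomial theorem and using $\Theta_{n,0}=1$, $\Theta_{n,1}=\tfrac{1}{u_n}$ and $\Theta_{n,2}=\tfrac{2(1+u_nx)}{u_n^2}$, I obtain
\begin{eqnarray*}
S_n^*(e_0;x) &=& \Theta_{n,0}=1,\\
S_n^*(e_1;x) &=& x+\Theta_{n,1}=x+\frac{1}{u_n},\\
S_n^*(e_2;x) &=& \Theta_{n,2}+2x\,\Theta_{n,1}+x^2=x^2+\frac{2x}{u_n}+\frac{2(1+u_nx)}{u_n^2}.
\end{eqnarray*}

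Next I would let $n\to\infty$. Since $u_n\to\infty$, each correction term tends to zero; moreover on any compact interval $[0,b]$ the variable $x$ is bounded, so the quantities $\tfrac{1}{u_n}$, $\tfrac{2x}{u_n}$ and $\Theta_{n,2}$ are dominated uniformly in $x$ by expressions that vanish as $n\to\infty$. Consequently
\begin{eqnarray*}
S_n^*(e_0;x)\to 1,\qquad S_n^*(e_1;x)\to x,\qquad S_n^*(e_2;x)\to x^2,
\end{eqnarray*}
each convergence holding uniformly on $[0,b]$. Applying the Bohman--Korovkin theorem then yields $S_n^*(g;x)\to g(x)$ uniformly on every compact subinterval of $[0,\infty)$.

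The point requiring the most care is not any single estimate but the passage from the classical Korovkin theorem, stated for a fixed compact interval, to the present setting on $[0,\infty)$: one must restrict to a compact window $[0,b]$ and confirm that the test-function convergence there is genuinely uniform in $x$, which the explicit formulas above guarantee through the boundedness of $x$ on $[0,b]$. Everything else reduces to the routine verification of positivity together with the binomial bookkeeping that converts central moments into ordinary moments.
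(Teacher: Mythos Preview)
Your proposal is correct and follows essentially the same approach as the paper: both invoke the Bohman--Korovkin theorem after verifying that $S_n^*(e_i;x)\to e_i(x)$ for $i=0,1,2$ uniformly on compact intervals. Your version is more detailed---you explicitly recover the ordinary moments from the central moments and check uniformity in $x$---whereas the paper simply asserts the three limits and appeals to Korovkin.
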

\begin{proof}
Using Bohman-Korovkin theorem, we can get our required result. Since $\underset{n\to\infty} \lim S_n^*(1;x)\to1$, $\underset{n\to\infty} \lim S_n^*(t;x)\to x$, $\underset{n\to\infty} \lim S_n^*(t^2;x)\to x^2$ and hence the proposed operators $S_n^*(g;x)$  converge uniformly to the function $g(x)$ on any compact interval of $[0,\infty)$. 
\end{proof}

\section{Asymptotic behaviour of the operators}
To check the asymptotic behavior of the operators, we shall prove the Voronovskaaya type theorem. 
\begin{theorem}
Let us consider the function $g$ is integrable, continuous and bounded on $[0,\infty)$as well as the second derivative of the function is exists at a point $x\in[0,\infty)$, then the convergence of the operators can be obtained as:
\begin{eqnarray}
\underset{n\to\infty}\lim u_n\left(S_n^*(g;x)g(t)-g(x)\right)&=& g'(x)+xg''(x).
\end{eqnarray}
\end{theorem}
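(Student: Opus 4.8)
The plan is to prove this Voronovskaya-type asymptotic formula via the classical Taylor-expansion-plus-moment-estimates technique, which is the standard route for such theorems. The target statement asserts
\[
\lim_{n\to\infty} u_n\left(S_n^*(g;x)-g(x)\right) = g'(x) + x g''(x),
\]
and the natural engine is a second-order Taylor expansion of $g$ around the point $x$, combined with the central moment asymptotics already catalogued in Lemma \ref{lem1} and Remark \ref{rem1}.

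\textbf{Step 1: Taylor expansion.} First I would write, using the assumed existence of $g''(x)$, the local expansion
\[
g(t) = g(x) + g'(x)(t-x) + \tfrac{1}{2} g''(x)(t-x)^2 + \eta(t,x)(t-x)^2,
\]
where $\eta(t,x)$ is the Peano remainder satisfying $\eta(t,x)\to 0$ as $t\to x$, and $\eta$ is bounded on $[0,\infty)$ because $g$ is bounded. Applying the linear operator $S_n^*$ to both sides and using its reproduction properties, the main terms convert directly into the central moments: the constant term gives $g(x)\Theta_{n,0}$, the linear term gives $g'(x)\Theta_{n,1}$, and the quadratic term gives $\tfrac{1}{2}g''(x)\Theta_{n,2}$.

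\textbf{Step 2: Insert the moment values.} Next I would substitute $\Theta_{n,0}=1$ and $\Theta_{n,1}=1/u_n$ from Lemma \ref{lem1}, and $\Theta_{n,2}=\frac{2(1+u_nx)}{u_n^2}$ from the second Remark. Multiplying through by $u_n$ gives
\[
u_n\left(S_n^*(g;x)-g(x)\right) = g'(x)\,u_n\Theta_{n,1} + \tfrac{1}{2}g''(x)\,u_n\Theta_{n,2} + u_n\,S_n^*\!\left(\eta(t,x)(t-x)^2;x\right).
\]
Here $u_n\Theta_{n,1}=1$ exactly, and by Remark \ref{rem1} $u_n\Theta_{n,2}\to 2x$, so the first two terms tend to $g'(x) + x g''(x)$, which is precisely the claimed limit.

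\textbf{The hard part} will be controlling the remainder term $u_n S_n^*(\eta(t,x)(t-x)^2;x)$ and showing it vanishes in the limit. The standard argument is a Cauchy–Schwarz split: given $\varepsilon>0$, choose $\delta$ so that $|\eta(t,x)|<\varepsilon$ for $|t-x|<\delta$; on this region the contribution is bounded by $\varepsilon\, u_n\Theta_{n,2}\to 2\varepsilon x$, while on the complementary region $|t-x|\ge\delta$ one uses boundedness of $\eta$ together with the higher-moment decay $\Theta_{n,4}=O(u_n^{-2})$ from Lemma \ref{lem1} to bound $u_n S_n^*(\mathbf{1}_{|t-x|\ge\delta}(t-x)^2;x) \le u_n\delta^{-2}\Theta_{n,4}\to 0$. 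Since $\varepsilon$ is arbitrary, the remainder vanishes. The only genuine subtlety is verifying that the operator indeed reproduces moments exactly as Lemma \ref{lem1} records (so that applying $S_n^*$ termwise to the Taylor polynomial is legitimate), which follows from linearity and the convergence of the defining series; I would state this reliance explicitly rather than re-derive it.
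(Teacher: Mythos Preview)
Your proposal is correct and follows the same classical Taylor-expansion-plus-central-moments route as the paper. The only point of divergence is the handling of the remainder $u_n S_n^*\!\left(\eta(t,x)(t-x)^2;x\right)$: the paper applies the Cauchy--Schwarz inequality to bound it by $u_n\sqrt{S_n^*(\eta^2)\cdot S_n^*((t-x)^4)}$, then invokes the Korovkin-type convergence Theorem~\ref{th1} to get $S_n^*(\eta^2)\to\eta^2(x,x)=0$ and Lemma~\ref{lem1} for $S_n^*((t-x)^4)=O(u_n^{-2})$. You instead use the near/far $\varepsilon$--$\delta$ decomposition with the fourth-moment bound on the far region (note: you call this a ``Cauchy--Schwarz split,'' but what you actually describe is the near/far split, not Cauchy--Schwarz). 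Both arguments are standard and valid; yours is slightly more self-contained since it avoids appealing to the separate convergence theorem, while the paper's version is a touch shorter once Theorem~\ref{th1} is available.
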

\begin{proof}
Using the Taylor's series expansion, one can write
\begin{eqnarray}\label{eq1}
g(t)-g(x)=(t-x)g'(x)+\frac{1}{2}(t-x)^2g''(x)+\zeta(t,x)(t-x)^2,
\end{eqnarray}
where $\zeta(t,x)$ be such that $\underset{t\to x}\lim \zeta(t,x)=0.$ On applying the proposed operators on the above equation (\ref{eq1}), we get
\begin{eqnarray}\label{eq2}
S_n^*(g;x)g(t)-g(x)&=& g'(x)S_n^*(t-x;x)+\frac{g''(x)}{2}S_n^*((t-x)^2;x)+S_n^*\left(\zeta(t,x)(t-x)^2\right)
\end{eqnarray}
Here
\begin{eqnarray}
S_n^*\left(\zeta(t,x)(t-x)^2\right)\leq \sqrt{S_n^*\left(\zeta^2(t,x)\right)S_n^*\left((t-x)^4\right)}
\end{eqnarray}
Using Theorem \ref{th1}, we get 

\begin{eqnarray}
\underset{n\to\infty}\lim S_n^*\left(\zeta^2(t,x)\right)=\zeta^2(x,x)=0.
\end{eqnarray}
And using Lemma \ref{lem1}, we can have 
\begin{eqnarray}
S_n^*\left((t-x)^4\right)=O\left(u_n^{-2} \right),
\end{eqnarray}
thus 
\begin{eqnarray}
\underset{n\to\infty}\lim S_n^*\left(\zeta(t,x)(t-x)^2\right)=0
\end{eqnarray}
Therefore, form equation (\ref{eq2}) and Lemma \ref{lem1}, one can write 
\begin{eqnarray}
\underset{n\to\infty}\lim u_n\left(S_n^*(g;x)g(t)-g(x)\right)&=& g'(x)+xg''(x).
\end{eqnarray}
Hence, the required result.
\end{proof}
\section{Quantitative approximation}

Generally, we check the pointwise convergence of the operators in the form of Voronovskaya-type theorem but in the quantitative means, we determine  an upper bound of this convergence. So, here we describe the quantitative means of Voronovskaya type theorem for the proposed operators. Before, proceeding on the main results, we need some functions classes,which are defined below:

$B_w[0,\infty)=\{g:[0,\infty)\to\mathbb{R} |~~  |g(x)|\leq Mw(x)~~\text{with~the~supremum~norm}~~ \|g\|_w=\underset{x\in [0,\infty)}\sup\frac{g(x)}{w(x)}<+\infty \}$, where $M>0$ is a constant depending on $f$ and the spaces
$$C_w[0,\infty)=\{g\in B_w[0,\infty), ~g~\text{is~contiuous} \},$$
$$C_w^k[0,\infty)=\{g\in C_w[0,\infty),\underset{x\to\infty}\lim\frac{|f(x)|}{w(x)}=k_g<+\infty\},$$      
where $w(x)=1+x^2$ is a weight function. Here, the weighted modulus of smoothness is defined in \cite{IN1} and is denoted by $\Delta(g;\xi)$,  given as:
\begin{eqnarray}\label{eq3}
\Delta(g;\xi)=\underset{0\leq h\leq\xi,~0\leq x\leq\infty}\sup \frac{|g(x+h)-g(x)|}{(1+h^2)(1+x^2)},~~~~~~~g\in C_w^k[0,\infty),~~~\xi>0. 
\end{eqnarray} 

The properties of the weighted modulus of smoothness are as: 
\begin{eqnarray}
\underset{\xi\to 0}\lim\Delta(g;\xi)=0
\end{eqnarray}
and
\begin{eqnarray}\label{eq4}
\Delta(g;\eta\xi)\leq 2(1+\eta)(1+\xi^2)\Delta(g;\xi),~~~\eta>0.
\end{eqnarray}
\begin{remark}
By the above relation \ref{eq4} and (\ref{eq3}), one can write:
\begin{eqnarray*}
|g(t)-g(x)|&\leq &(1+(t-x)^2)(1+x^2)\Delta(g;|t-x|)\\
&\leq & 2\left(1+\frac{|t-x|}{\xi} \right)(1+\xi^2)\Delta(g;\xi)(1+(t-x)^2)(1+x^2).
\end{eqnarray*} 
\end{remark}

\begin{theorem}\label{th4} 
For the function $g\in C_w^k[0,\infty)$ and assuming $g''(x)$ exists at a point $x$, the following inequality holds:
\begin{eqnarray}
u_n\left|S_n^*(g;x)-g(x)-\frac{g'(x)}{u_n}-\frac{g''(x)}{u_n}\left(x+\frac{1}{u_n} \right)\right| &=& O(1)\Delta\left(g'', \sqrt{\frac{1}{u_n}}\right). 
\end{eqnarray}
\end{theorem}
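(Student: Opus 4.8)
The plan is to combine a second-order Taylor expansion with the moment identities of Lemma \ref{lem1}, and then control the Taylor remainder by the weighted modulus of smoothness of $g''$. First I would write the Taylor formula with remainder,
$$g(t) = g(x) + (t-x)g'(x) + \frac{(t-x)^2}{2}g''(x) + r(t,x),$$
where the remainder admits the representation $r(t,x) = \frac{(t-x)^2}{2}\bigl(g''(\eta) - g''(x)\bigr)$ for some $\eta$ between $x$ and $t$. Applying the linear operator $S_n^*$ and invoking the values $\Theta_{n,0}=1$, $\Theta_{n,1}=1/u_n$, and $\Theta_{n,2}=\frac{2}{u_n}\bigl(x+\frac{1}{u_n}\bigr)$ from Lemma \ref{lem1}, the three leading terms reproduce exactly $g(x)+\frac{g'(x)}{u_n}+\frac{g''(x)}{u_n}\bigl(x+\frac{1}{u_n}\bigr)$. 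Hence the quantity inside the absolute value collapses to $S_n^*(r(t,x);x)$, and the whole problem reduces to estimating $u_n\,\bigl|S_n^*(r(t,x);x)\bigr|$.

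Second, I would bound the remainder by applying the Remark preceding the theorem to the function $g''$ in place of $g$. Since $|\eta-x|\le|t-x|$, this yields, for any $\xi>0$,
$$|r(t,x)| \le (1+x^2)(1+\xi^2)\Delta(g'';\xi)\,(t-x)^2\Bigl(1+\tfrac{|t-x|}{\xi}\Bigr)\bigl(1+(t-x)^2\bigr).$$
Expanding the product and applying $S_n^*$ by linearity leaves a combination of the central moments $\Theta_{n,2}$, $\Theta_{n,4}$ together with the odd absolute moments $S_n^*(|t-x|^3;x)$ and $S_n^*(|t-x|^5;x)$, all multiplied by the prefactor $(1+x^2)(1+\xi^2)\Delta(g'';\xi)$.

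Third, I would estimate the orders. By Lemma \ref{lem1}, $\Theta_{n,2}=O(u_n^{-1})$ and $\Theta_{n,4}=O(u_n^{-2})$, while the odd absolute moments are controlled by the Cauchy--Schwarz inequality, $S_n^*(|t-x|^3;x)\le\sqrt{\Theta_{n,2}\Theta_{n,4}}=O(u_n^{-3/2})$ and $S_n^*(|t-x|^5;x)\le\sqrt{\Theta_{n,4}\Theta_{n,6}}=O(u_n^{-5/2})$. The decisive move is the calibration $\xi=\sqrt{1/u_n}$, so that $1/\xi=u_n^{1/2}$ and each of the four terms $\Theta_{n,2}$, $\Theta_{n,4}$, $u_n^{1/2}S_n^*(|t-x|^3;x)$, and $u_n^{1/2}S_n^*(|t-x|^5;x)$ is of order $O(u_n^{-1})$, whereas $(1+\xi^2)$ stays bounded. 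Collecting terms gives $\bigl|S_n^*(r(t,x);x)\bigr|=O(u_n^{-1})\,\Delta\bigl(g'';\sqrt{1/u_n}\bigr)$, and multiplying by $u_n$ produces the claimed estimate.

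I expect the main obstacle to be the honest treatment of the odd absolute moments $S_n^*(|t-x|^3;x)$ and $S_n^*(|t-x|^5;x)$, since Lemma \ref{lem1} only furnishes estimates for the signed even central moments; the Cauchy--Schwarz pairing with the next even moment is what keeps their orders in line. A secondary point requiring care is verifying that the weight factor $\bigl(1+(t-x)^2\bigr)$ arising from the weighted modulus does not inflate the order --- this is precisely why the fourth and sixth moments enter, rather than $\Theta_{n,2}$ alone, and why the balance $\xi=\sqrt{1/u_n}$ must be chosen exactly so.
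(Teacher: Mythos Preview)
Your argument is correct and follows the same overall architecture as the paper's proof: Taylor expansion, identification of the leading terms via $\Theta_{n,1}$ and $\Theta_{n,2}$, control of the remainder through the weighted modulus $\Delta(g'';\cdot)$, and the calibration $\xi=u_n^{-1/2}$. The only genuine difference lies in how the remainder is reduced to moment estimates. You expand the product $\bigl(1+\tfrac{|t-x|}{\xi}\bigr)\bigl(1+(t-x)^2\bigr)(t-x)^2$ directly, which forces you to deal with the odd absolute moments $S_n^*(|t-x|^3;x)$ and $S_n^*(|t-x|^5;x)$ and to invoke Cauchy--Schwarz against the even ones. The paper instead performs a case split $|t-x|<\delta$ versus $|t-x|\ge\delta$: on the near set the factor $\bigl(1+\tfrac{|t-x|}{\delta}\bigr)(1+(t-x)^2)$ is bounded by a constant, and on the far set it is dominated by $(t-x)^4/\delta^4$, so that only the even moments $\Theta_{n,2}$ and $\Theta_{n,6}$ appear and no Cauchy--Schwarz step is needed. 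Your route is slightly more laborious but entirely sound; the paper's splitting trick is a common device precisely because it sidesteps the odd absolute moments you flagged as the main obstacle. (A minor remark: in your final balance, $\Theta_{n,4}$ and $u_n^{1/2}S_n^*(|t-x|^5;x)$ are actually $O(u_n^{-2})$, not $O(u_n^{-1})$, but this only helps.)
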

\begin{proof}
By Taylor's series expansion, one can obtain
\begin{eqnarray}
g(t)-g(x)=g'(x)(t-x)+\frac{g''(x)}{2}(t-x)^2+\zeta(t,x),
\end{eqnarray}
where $\zeta(t,x)=\frac{\mathrm{g}''(\theta)-\mathrm{g}''(x)}{2!}(\theta-x)^2$ and $\theta\in (t,x)$.
Applying operators (\ref{rb}) and multiplying by $u_n$ on both sides to above expansion, we obtain:
\begin{eqnarray*}\label{n1}
u_n\left|S_n^*(g;x)-g(x)-g'(x)S_n^*(t-x;x)-\frac{g''(x)}{2}S_n^*((t-x)^2;x)\right| &\leq & u_nS_n^*(|\zeta(t,x)|;x)\\
u_n\left|S_n^*(g;x)-g(x)-\frac{g'(x)}{u_n}-\frac{g''(x)}{u_n}\left(x+\frac{1}{u_n} \right)\right| &\leq & u_nS_n^*(|\zeta(t,x)|;x).
\end{eqnarray*}
On the other hand,

\begin{eqnarray*}
\frac{g''(\theta)-g''(x)}{2} &\leq &\frac{1}{2}(1+(\theta-x)^2)(1+x^2)\Delta(g'',|\theta-x|)\\
&\leq &\frac{1}{2}(1+(t-x)^2)(1+x^2)\Delta(g'',|t-x|)\\
&\leq & \left(1+\frac{|t-x|}{\delta} \right)(1+\delta^2)(1+(t-x)^2)(1+x^2)\Delta(g'',\delta),
\end{eqnarray*}
and it can be written as:
\begin{eqnarray}
\left|\frac{g''(\theta)-g''(x)}{2}\right| &\leq &  
\begin{cases}
    2(1+\delta^2)^2(1+x^2)\Delta(f'',\delta),& |t-x|<\delta,\\
    2(1+\delta^2)^2(1+x^2)\frac{(t-x)^4}{\delta^4}\Delta(g'',\delta),& |t-x|\geq\delta.
\end{cases} 
\end{eqnarray}
So, for $\delta\in(0,1)$, we get
\begin{eqnarray}
\left|\frac{g''(\theta)-g''(x)}{2}\right| &\leq & 8(1+x^2)\left(1+\frac{(t-x)^4}{\delta^4}\right)\Delta(g'',\delta). 
\end{eqnarray}
Hence, $$(|\zeta(t,x)|;x)\leq 8(1+x^2)\left((t-x)^2+\frac{(t-x)^6}{\delta^4}\right)\Delta(g'',\delta).$$
Thus, applying the proposed operators (\ref{rb}) to the both sides and using the Lemma \ref{lem1}, we get  

\begin{eqnarray*}
S_n^*(| \zeta(t,x)|;x)&\leq & 8(1+x^2)\Delta(g'',\delta)\left(S_n^*((t-x)^2;x)+\frac{S_n^*((t-x)^6;x)}{\delta^4}\right) \\
&\leq & 8(1+x^2)\Delta(g'',\delta) \left(O\left(\frac{1}{u_n} \right)+\frac{1}{\delta^4} O\left(\frac{1}{u_n^3} \right) \right),~~\text{as}~u_n\to\infty.
\end{eqnarray*}
Choose, $\delta=\sqrt{\frac{1}{u_n}}$, then
\begin{eqnarray}
S_n^*(| \zeta(t,x)|;x)\leq 8 O\left(\frac{1}{u_n}\right)(1+x^2) \Delta\left(g'',\sqrt{\frac{1}{u_n}}\right).
\end{eqnarray}

Thus, it yields as:
\begin{eqnarray}\label{n2}
u_nS_n^*(|\eta(t,x)|;x)=O(1)\Delta\left(g'', \sqrt{\frac{1}{u_n}}\right).
\end{eqnarray}
By (\ref{n1}) and (\ref{n2}), we obtain the required result.
\end{proof}
\subsection{Gr$\ddot{\text{u}}$ss Voronovskaya-type Theorem}
This type of theorem plays an important role in the theory of approximation. First of all, in 1935, Gr$\ddot{\text{u}}$ss \cite{GG} defined an inequality, known as  Gr$\ddot{\text{u}}$ss inequality after his name and it estimate with a relation between the integral of a product and product of integrals of the two functions. Its interest increased after its publication, now a days, the importance of this inequality is being usually seen in many research articles. Firstly, Gal and Gonska \cite{GGH} deterimed the Gr$\ddot{\text{u}}$ss Voronovskaya-type  theorem with the aid of Gr$\ddot{\text{u}}$ss inequality  for the Bernstein's polynomials  and after that many reserachers contributed thier effort in this regard and effective research is being done in this direction, this type of research put a crucial impact for the liner positive operators  in  appproximation theory. In a note \cite{GTG1}, the authors obtained a new approach with the help of the least concave majorant by using Gr$\ddot{\text{u}}$ss inequality to the operators on a compact interval. Some research regarding Gr$\ddot{\text{u}}$ss Voronovskaya can be seen in  \cite{DEM,UAT,RY1,RY2,RY3,TJN}.

\begin{theorem}
Let $f, g\in C_w^k[0,\infty)$ for which $f', f'', g', g''\in C_w^k[0,\infty)$ then for each $x\geq0$, an expression can be obtained, which is:
\begin{eqnarray}
\underset{n\to\infty}\lim u_n\left(S_n^*(fg;x)-S_n^*(f;x)S_n^*(g;x) \right)=2xf'(x)g'(x).
\end{eqnarray}
\end{theorem}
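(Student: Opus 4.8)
The plan is to reduce the entire statement to the Voronovskaya-type theorem proved in Section 3, which for any admissible $h$ with $h''(x)$ existing gives $\lim_{n\to\infty}u_n\left(S_n^*(h;x)-h(x)\right)=h'(x)+xh''(x)$. The starting move is to insert the common term $f(x)g(x)=(fg)(x)$ and split the Gr\"uss expression as
\begin{eqnarray*}
S_n^*(fg;x)-S_n^*(f;x)S_n^*(g;x)&=&\left(S_n^*(fg;x)-f(x)g(x)\right)\\
&&-\left(S_n^*(f;x)S_n^*(g;x)-f(x)g(x)\right).
\end{eqnarray*}
The second parenthesis is then factored through the telescoping identity
\begin{eqnarray*}
S_n^*(f;x)S_n^*(g;x)-f(x)g(x)&=&S_n^*(g;x)\left(S_n^*(f;x)-f(x)\right)\\
&&+f(x)\left(S_n^*(g;x)-g(x)\right),
\end{eqnarray*}
so that after multiplying by $u_n$ every piece is of Voronovskaya shape.

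Next I would pass to the limit term by term. Applying the Voronovskaya theorem to $f$ and to $g$ individually, together with the uniform convergence $S_n^*(g;x)\to g(x)$ from Theorem \ref{th1}, gives
\begin{eqnarray*}
\lim_{n\to\infty}u_n\left(S_n^*(f;x)S_n^*(g;x)-f(x)g(x)\right)&=&g(x)\left(f'(x)+xf''(x)\right)\\
&&+f(x)\left(g'(x)+xg''(x)\right).
\end{eqnarray*}
For the first parenthesis I would apply the Voronovskaya theorem to the product $h=fg$, using $(fg)'=f'g+fg'$ and $(fg)''=f''g+2f'g'+fg''$, which yields
$$\lim_{n\to\infty}u_n\left(S_n^*(fg;x)-f(x)g(x)\right)=(fg)'(x)+x(fg)''(x).$$
Subtracting the two limits, all terms cancel in pairs except the cross term coming from the $2f'(x)g'(x)$ inside $(fg)''(x)$ multiplied by $x$, leaving exactly $2xf'(x)g'(x)$, as claimed.

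The main obstacle I anticipate is one of admissibility rather than of computation: the Voronovskaya theorem was established for functions that are continuous, bounded and integrable with a second derivative at $x$, whereas a product $fg$ of two elements of $C_w^k[0,\infty)$ grows like $(1+x^2)^2$, so it need not lie in the weighted class on which $S_n^*$ was shown to act nicely. The delicate point is therefore to justify that $S_n^*(fg;x)$ is well defined and that the Voronovskaya asymptotics genuinely persist for $fg$; this can be arranged because the hypothesis $f',f'',g',g''\in C_w^k[0,\infty)$ controls the relevant derivatives, while the central-moment estimates of Lemma \ref{lem1} force the Taylor remainder to vanish after multiplication by $u_n$. Once this admissibility is secured, the algebraic cancellation described above is immediate.
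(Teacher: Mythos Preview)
Your argument is correct and follows the standard Gr\"uss--Voronovskaya pattern: telescope $S_n^*(fg)-S_n^*(f)S_n^*(g)$ into three Voronovskaya-shaped pieces, pass to the limit in each, and watch everything cancel except the cross term $2xf'g'$. The paper reaches the same conclusion by a slightly different bookkeeping. Instead of invoking the plain Voronovskaya theorem of Section~3, it writes out an algebraic identity in which from each of $S_n^*(fg)-fg$, $S_n^*(f)-f$ and $S_n^*(g)-g$ the first two Taylor moments $(\cdot)'\Theta_{n,1}+\tfrac12(\cdot)''\Theta_{n,2}$ are explicitly subtracted, so that the three main brackets are exactly the remainders appearing in the \emph{quantitative} Voronovskaya Theorem~\ref{th4}; these brackets are then killed by Theorem~\ref{th4}, the surviving term $u_n\,f'(x)g'(x)\,\Theta_{n,2}$ is evaluated via Remark~\ref{rem1}, and the leftover correction terms vanish by Theorem~\ref{th1}. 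Your route is shorter and uses only the qualitative Voronovskaya limit; the paper's route is heavier but stays inside the weighted class $C_w^k$, which partly addresses the admissibility worry you raise about applying the bounded-function Voronovskaya theorem of Section~3 to the product $fg$. That said, the growth issue for $fg\sim(1+x^2)^2$ is glossed over in the paper as well, so your caveat is well placed.
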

\begin{proof}
For the function $f,g\in C_w^k[0,\infty)$ with $f', f'', g', g''\in C_w^k[0,\infty)$, we can write:
\begin{eqnarray*}
 n\left(S_n^*(fg;x)-S_n^*(f;x)S_n^*(g;x) \right)&=& n\Bigg\{\Bigg(S_n^*(fg;x)-f(x)g(x)-(fg)'\Theta_{n,1}\\
 &&-\frac{(fg)''}{2!}\Theta_{n,2}\Bigg)-g(x)\Bigg(S_n^*(f;x)-f(x)\\
 &&-f'(x)\Theta_{n,1}-\frac{f''(x)}{2!}\Theta_{n,2} \Bigg)\\
 &&-S_n^*(f;x)\Bigg(S_n^*(g;x)-g(x)-g'(x)\Theta_{n,1}\\
 &&-\frac{g''(x)}{2!}\Theta_{n,2} \Bigg)+\frac{g''(x)}{2!}S_n^*((t-x)^2;x)\\
 &&\times \left(f- S_n^*(f;x)\right)+f'(x)g'(x)\Theta_{n,2}\\
 &&+ g'(x)\Theta_{n,1}\left(f- S_n^*(f;x)\right) \Bigg\}.
\end{eqnarray*}

For sufficiently large vale of $n$, applying the Theorem \ref{th1} and with the help of Theorem \ref{th4} as well as applying the Remark \ref{rem1}, we get our desired result.
\begin{eqnarray*}
 \underset{n\to\infty}\lim u_n\left(S_n^*(fg;x)-\mathcal{U}_{n}^{[\alpha]}(f;x)S_n^*(g;x) \right)&=& 2xf'(x)g'(x).
\end{eqnarray*}

\end{proof}
\begin{example}
For the approximation by the operators defined by (\ref{rb}) to the given function, here, we consider the function $g:[0,4]\to [0,\infty)$ such that $g(x)=e^x$(blue), for all $x\in[0,4].$ Taking $n=25,50,100$ and corresponding operators are as $S_{25}^*$(green), $S_{50}^*$(red), $S_{100}^*$(black). Here the convergence can be seen by observing through the given Figure \ref{F1}. As the value of $n$ is increased, the approximation is going to be good. 
\begin{figure}[h!]
    \centering 
    \includegraphics[width=.52\textwidth]{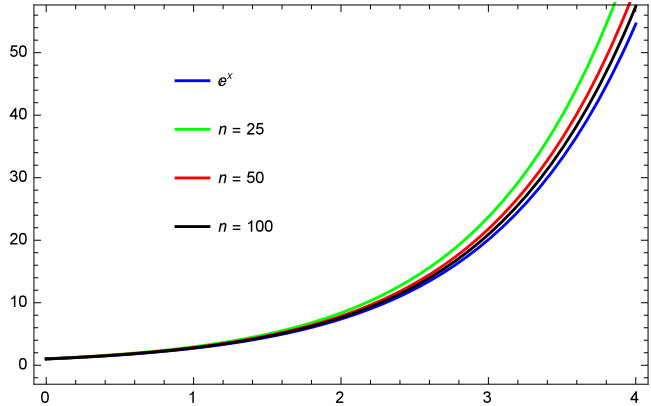}   
    \caption[Description in LOF, taken from~\cite{source}]{The convergence of the operators $S_{n}^*(g;x)$ to the function $g(x)(blue)$.}
    \label{F1}
\end{figure}
\end{example}
\begin{example}
Let the function $g=x^2\sin{2\pi x}$(blue) and for the values of $n$, equals to $50, 100, 150, 200, 300$ then the convergence of the corresponding operators $S_{50}^*, S_{100}^*, S_{150}^*,S_{200}^*, S_{300}^*$, represented pink, red, magenta, black, green colors respectively to the function is good as the value of $n$ is large as given in Figure \ref{F2}. The approximation can also be seen by observing from the given figure.

\begin{figure}[h!]
    \centering 
    \includegraphics[width=.52\textwidth]{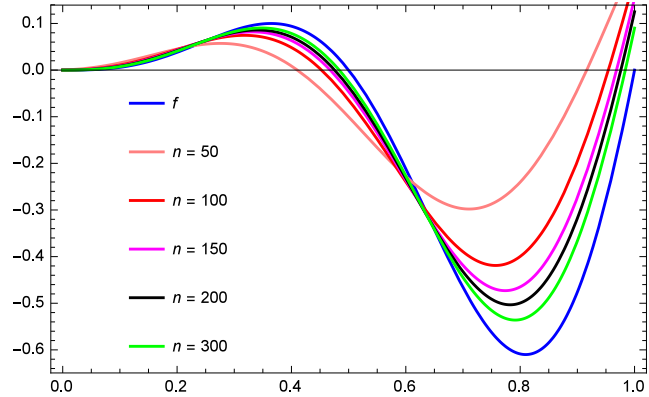}   
    \caption[Description in LOF, taken from~\cite{source}]{The convergence of the operators $S_{n}^*(g;x)$ to the function $g(x)(blue)$.}
    \label{F2}
\end{figure}
\end{example}
\textbf{Conclusion:}
Here we have determined the approximation properties for the functions belonging to different spaces and moreover, the order of approximation of the operators has been discussed. The asymptotic behaviour of the operator is discussed in the form of Voronovskaja type theorem and in this regard, we determined the upper bound of the pointwise convergent of the asymptotic formula for the operators with the prove of {Gr$\ddot{\text{u}}$ss Voronovskaya-type theorem. Finally, the approximation results of the operator are through graphically to justify the approximation properties of the operators.

\end{document}